  \theoremstyle{plain}
  \newtheorem*{thm*}{\protect\theoremname}
\theoremstyle{plain}
\newtheorem{thm}{\protect\theoremname}
  \theoremstyle{definition}
  \newtheorem{defn}[thm]{\protect\definitionname}
  \theoremstyle{plain}
  \newtheorem{lem}[thm]{\protect\lemmaname}
  \theoremstyle{plain}
  \newtheorem{cor}[thm]{\protect\corollaryname}
  \providecommand{\corollaryname}{Corollary}
  \providecommand{\definitionname}{Definition}
  \providecommand{\lemmaname}{Lemma}
  \providecommand{\theoremname}{Theorem}
\providecommand{\theoremname}{Theorem}
\begin{document}

\title{Uniform Ergodicity for Brownian Motion in a Bounded Convex Set}

\author{Jackson Loper}
\maketitle
\begin{abstract}
We consider an $n$-dimensional Brownian Motion trapped inside a bounded
convex set by normally-reflecting boundaries. It is well-known that
this process is uniformly ergodic. However, the rates of this ergodicity
are not well-understood, especially in the regime of very high-dimensional
sets. Here we present new bounds on these rates for convex sets with
a given diameter. Our bounds do not depend upon the smoothness of
the boundary nor the value of the ambient dimension, $n$.
\end{abstract}
\global\long\def\tv#1{\left\Vert #1\right\Vert _{\mathrm{TV}}}
 \global\long\def\ltn#1{\left\Vert #1\right\Vert _{\mathscr{L}^{2}}}
 \global\long\def\pt#1#2#3{p\left(#1,#2,#3\right)}
 \global\long\def\FF#1#2{F_{d}\left(#1,#2\right)}

\section{Introduction}

Let $\Omega$ be an open, convex set with diameter $d$. We will consider
a Brownian motion, $\left\{ X_{t}\right\} _{t\geq0}$, trapped inside
$\Omega$ by normally-reflecting boundaries, initialized at some point
$X_{0}=x$. We will give a more precise definition of this process
momentarily; intuitively, $X$ is a homogeneous Markov process which
behaves like a Brownian motion on $\Omega$, spends essentially no
time at the boundary (which we will denote $\partial\Omega$), and
is always contained in the closure (which we will denote $\bar{\Omega}$).
Let $\left\{ \pt tx{dy}\right\} _{t\geq0,x\in\bar{\Omega}}$ denote
the transition measures of the process, i.e. $\mathbb{P}(X_{t}\in A|X_{0}=x)=\int_{A}\pt tx{dy}$.
It is well-known that the process $X$ is ergodic, with stationary
distribution $\sigma$, where $\sigma(dy)\triangleq dy/\mathrm{Vol}(\Omega)$.
Thus for every $x$ we must have that 
\[
\pt tx{\cdot}\rightarrow\sigma\quad\text{as}\quad t\rightarrow\infty
\]
in various modes of convergence. The rate and exact nature of this
convergence has been investigated in a number of ways. For example,
consider the case that $\Omega$ is a convex polytope which can be
contained inside a cube of diameter $d$, such as $[0,d/\sqrt{n}]^{n}$.
In this special case, it has long been known that 
\begin{equation}
\sup_{x}\tv{\pt tx{\cdot}-\sigma}\leq\sqrt{\frac{d^{2}}{2\pi t}}\label{eq:matthews}
\end{equation}
where $\tv{\mu}\triangleq\sup_{A}\left|\mu(A)\right|$ (cf. \cite{matthews1990mixing}).
Another important result follows from a certain Poincare constant
on arbitrary bounded convex domains, first rigorously shown by Bebendorf
in \cite{bebendorf2003note}. Using this result one may readily show
that 
\begin{gather}
\begin{array}{l}
\left|\int g(x)\left(\int f(y)\pt tx{dy}\right)dy-\int f(y)\sigma(dy)\right|\\
\leq\exp\left(-\frac{1}{2}\left(\frac{\pi}{d}\right)^{2}t\right)\ltn{g-\frac{1}{\mathrm{Vol}(\Omega)}}\ltn f
\end{array}\label{eq:bebendorf}
\end{gather}
where we define $\ltn f^{2}\triangleq\int f^{2}dx$ and take $g$
to be any density (i.e. $g\geq0$ and $\int g=1$). One way to see
the significance of this formula is to take $Y$ to be some variable
with $Y\sim\sigma$, and $\rho(dx)=g(x)dx$ to be some initial distribution.
Equation \ref{eq:bebendorf} can then be understood as a bound on
the rate at which $\mathbb{E}\left[f(X_{t})|X_{0}\sim\rho\right]\rightarrow\mathbb{E}\left[f(Y)\right]$.
Comparing Bebendorf's result with Equation \ref{eq:matthews}, we
see that Bebendorf's result has several advantages: 
\begin{enumerate}
\item Equation \ref{eq:matthews} becomes less powerful as $n\rightarrow\infty$,
but Equation \ref{eq:bebendorf} does not suffer from this deficit.
To see how Equation \ref{eq:matthews} fails in high dimensions, consider
what it says about a convex polytope of diameter $d$ which is roughly
spherical. Such a polytope generally cannot\emph{ }fit inside a cube
of diameter $d$. Indeed, to enclose an $n$-dimensional ball of diameter
$d$, one needs a cube with diameter $d\sqrt{n}$. Thus, as $n\rightarrow\infty$,
Equation \ref{eq:matthews} becomes quite weak for certain kinds of
diameter-$d$ sets. Equation \ref{eq:bebendorf} does not suffer from
this problem. 
\item Equation \ref{eq:bebendorf} has exponential decay instead of polynomial
decay. 
\item Equation \ref{eq:bebendorf} does not require $\Omega$ to be a convex
polytope. 
\end{enumerate}
However, Bebendorf's result also has some problems: it is not directly
applicable when the initial distribution on $X_{0}$ isn't absolutely
continuous with respect to Lebesgue measure. For example, let $\delta_{x}$
denote the degenerate initial distribution defined by $\delta_{x}(A)=\mathbb{I}_{x\in A}$.
If we try apply Equation \ref{eq:bebendorf} to the limiting case
as $g(x)dx\rightarrow\delta_{x}(dx)$, the right hand side of the
bound must become infinite. This can be somewhat remedied by using
some other technique to try to understand the short-term behavior
of the diffusion, and then applying Bebendorf's result for asymptotic
results. For example, one can pick a small $t_{0}>0$ and estimate
the distribution of $X_{0}|X_{-t_{0}}=x$, $g_{0}$. In most cases
this distribution will have a continuous density, and so one can apply
Bebendorf's inequality to the modified problem in which $g=g_{0}$
and time has been shifted by $t_{0}$. If the density of $X_{0}|X_{-t_{0}}=x$
can be accurately computed for modestly large $t_{0}$, this will
be effective \textendash{} however, in that case these kinds of estimates
are not necessary in the first place. For small values of $t_{0}$,
this method yields a very weak bound (in particular, in the limit
as $t_{0}\rightarrow0$, it fails completely). 

More generally, a rich understanding of the rate of ergodicity remains
elusive, even in this simple convex case. How does it depend upon
the dimension? How does it depend upon the initial condition? Here
we will show results of \emph{uniform }ergodicity, similar to those
of Bebendorf insofar as they do not depend upon dimension, but different
in that they apply uniformly regardless of initial condition.

It turns out that a simple one-dimensional diffusion can shed some
light on these questions. Let $\left\{ W_{t}\right\} _{t\geq0}$ denote
a one-dimensional Brownian motion. Let $\tilde{\tau}_{d}=\inf\left\{ t:\ W_{t}\notin(-d,d)\right\} $.
Note that the distribution of this object is straightforward to calculate
and analyze. For example, in \cite{hu2012hitting} it is shown that
the survival function of $\tilde{\tau}_{d}$ is given by 
\[
\mathbb{P}(t\leq\tilde{\tau}_{d}|W_{t}=k)=\FF tk\triangleq\sum_{n=0}^{\infty}e^{-\frac{\pi^{2}}{8d^{2}}\left(2n+1\right)^{2}t}\frac{4\left(-1\right)^{n}}{\pi\left(2n+1\right)}\cos\left(\frac{2n+1}{2}\times\frac{\pi k}{d}\right)
\]
Numerical estimation of this sum is straightforward and effective
in practice. Indeed, $\FF tk$ is simply the solution to the heat
equation on $[-d,d]$ with homogeneous Dirichlet boundary conditions
and initial condition $\FF 0k=\mathbb{I}_{k\in(-d,d)}$; this partial
differential equation is well-understood (cf. \cite{cannon1984one}).
It is also easy to bound $F_{d}$ using the moment generating function
of $\tilde{\tau}$ and a Chernoff bound; the moment generating function
can be deduced by an application of the Kac moment formula, yielding
$\mathbb{E}\left[e^{\gamma\tilde{\tau}_{d}}|W_{t}=k\right]=\cos(\sqrt{2\gamma}k)/\cos(\sqrt{2\gamma}d)$
for any $\gamma\leq\pi^{2}/8d^{2}$ (cf. \cite{fitzsimmons1999kac,khas1959positive}).

We can use the distribution of $\tilde{\tau}_{d}$ to help us understand
the rate of ergodicity for convex domains: 
\begin{thm*}
Let $\Omega\subset\mathbb{R}^{n}$ bounded, open, convex, with diameter
$d$. Let $\left\{ \pt tx{dy}\right\} _{t\geq0,x\in\Omega}$ denote
the transition measures of Brownian motion trapped inside $\Omega$
by normally-reflecting barriers. Then 
\begin{align*}
\tv{\pt tx{\cdot}-\pt ty{\cdot}} & \leq\FF{4t}{d-\left|x-y\right|}\\
\tv{\pt tx{\cdot}-\sigma} & \leq\int\FF{4t}{d-\left|x-y\right|}\sigma(dy)\leq\FF{4t}0
\end{align*}
This very last bound is tight within a factor of 2. In, particular,
taking the special case that $\Omega=\left[0,d\right]\subset\mathbb{R}$,
we have that $\FF{4t}0\leq2\tv{\pt t0{\cdot}-\sigma}$. 
\end{thm*}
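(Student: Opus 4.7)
The plan is a reflection (mirror) coupling argument combined with an explicit one-dimensional computation for the tightness claim. First, I would run two copies $X, Y$ of the reflected Brownian motion starting at $x$ and $y$, coupled so that while $X_t \neq Y_t$ the Brownian increments driving $Y$ are those of $X$ reflected across the hyperplane through the midpoint of the segment $X_tY_t$ perpendicular to $e_t := (X_t - Y_t)/|X_t - Y_t|$; after the coupling time $\tau := \inf\{t : X_t = Y_t\}$ the processes are run identically. Because the reflected increments move $X_t - Y_t$ only along $e_t$, It\^o's formula (with no transverse quadratic variation, so the usual Bessel drift cancels) gives
\[
r_t := |X_t - Y_t| = |x-y| + 2\tilde W_t - A_t,
\]
where $\tilde W_t := \int_0^t e_s \cdot dB_s$ is a standard one-dimensional Brownian motion and $A_t$ is a continuous non-decreasing process collecting the two boundary local times. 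Convexity of $\Omega$ enters exactly here: at a boundary point $X_t$, $e_t \cdot \nu(X_t) \leq 0$ because $Y_t \in \bar\Omega$ lies on the inward side of the supporting hyperplane at $X_t$, and symmetrically for $Y$, so each boundary push enters $dr_t$ with a non-positive sign.

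Next I would use $r_t \leq d$ (the diameter bound) together with the minimality part of Skorohod reflection to obtain $A_t \geq \hat A_t := \max\bigl(0,\, \sup_{s \leq t}(|x-y| + 2\tilde W_s - d)\bigr)$, so that $r_t \leq \hat r_t := |x-y| + 2\tilde W_t - \hat A_t$, the Brownian motion $|x-y|+2\tilde W$ reflected at $d$ from below. Writing $\hat r_t = d - |U_t|$ with $U_t$ a free variance-$4t$ Brownian motion starting at $d - |x-y|$, the hitting time $\hat\tau := \inf\{t : \hat r_t = 0\}$ equals the first exit of $U$ from $(-d, d)$; the definition of $F_d$ together with the evenness $F_d(t,k)=F_d(t,-k)$ yields $\mathbb{P}(\hat\tau > t) = \FF{4t}{d-|x-y|}$. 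Since $\tau \leq \hat\tau$, the coupling inequality $\tv{\pt tx\cdot - \pt ty\cdot} \leq \mathbb{P}(\tau > t)$ gives the first bound. Integrating this against $\sigma$ and using the stationarity identity $\sigma = \int\sigma(dy)\,\pt ty\cdot$ with the triangle inequality yields the middle inequality of the second line, and $\FF{4t}{d-|x-y|} \leq \FF{4t}0$ reduces to the standard fact that the survival probability inside $(-d, d)$ is maximised at the centre (the function $F_d(t, \cdot)$ is even, non-negative, log-concave on $[-d, d]$ and vanishes at $\pm d$).

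For the tightness claim I would specialise to $\Omega = [0, d]$ and use the explicit symmetric coupling $Y_t = d - X_t$: then $\tau = \inf\{t : X_t = d/2\}$, and since $X$ cannot reach $d$ before $d/2$, on $[0,\tau]$ the process $X$ agrees with $|B|$ for a standard Brownian motion $B$ starting at $0$. Hence $\mathbb{P}(\tau > t) = \mathbb{P}(|B_s| < d/2,\ \forall s\leq t) = F_{d/2}(t,0) = \FF{4t}0$ by Brownian scaling. Writing $\mu = \pt t0\cdot$, $\nu = \pt td\cdot$, the spatial symmetry $\mu(y) = \nu(d-y)$ gives
\[
\int_0^{d/2} [\mu(y) - \nu(y)]\, dy = \mathbb{P}(X_t \leq d/2 \mid X_0 = 0) - \mathbb{P}(X_t \leq d/2 \mid X_0 = d),
\]
and the strong Markov property at $\tau$ (combined with symmetry of the post-$\tau$ law about $d/2$) evaluates each term as $\tfrac12 \pm \tfrac12\mathbb{P}(\tau > t)$, so the integral equals $\mathbb{P}(\tau > t)$. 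Since $\tv{\mu - \nu} \geq \int_0^{d/2}[\mu-\nu]\,dy$ always, this gives $\tv{\mu - \nu} \geq \FF{4t}0$; combined with $\tv{\mu - \nu} \leq \tv{\mu - \sigma} + \tv{\nu - \sigma} = 2\tv{\mu - \sigma}$ (using the spatial symmetry again) this produces $\FF{4t}0 \leq 2\tv{\pt t0\cdot - \sigma}$.

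The most delicate step is the It\^o/Skorohod analysis of $r_t$ with $\partial\Omega$ merely convex (no regularity assumed), in particular making the mirror coupling well-posed and justifying the non-positivity of boundary contributions to $dr_t$ in terms of supporting hyperplanes rather than classical normals. Once that is in place, the downstream analysis is essentially one-dimensional and follows standard recipes.
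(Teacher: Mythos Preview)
Your proposal is correct. The upper-bound argument is essentially identical to the paper's: both use the Atar--Burdzy mirror coupling, identify the boundary contribution to $d|X_t-Y_t|$ as non-positive via the supporting-hyperplane characterisation of convexity, and then compare $|X_t-Y_t|$ to a one-dimensional Brownian motion reflected downward at $d$ (the paper phrases the comparison via the running infimum of $d-B_{4s}-|x-y|$, you via Skorohod minimality and the identity $\hat r_t = d-|U_t|$; these are the same object).

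For the tightness claim the two proofs diverge in method though not in substance. The paper computes $V(t,x)=\mathbb{P}(X_t\leq d/2\mid X_0=x)$ on $[0,d]$ by solving the Neumann heat equation via a Fourier expansion, reading off $V(t,0)=\tfrac12+\tfrac12 F_d(4t,0)$ and then bounding $\tv{p(t,0,\cdot)-\sigma}\geq |V(t,0)-\tfrac12|$ directly. You obtain the same identity probabilistically, via the first-passage decomposition at $\tau=\inf\{t:X_t=d/2\}$ together with the observations that $X$ coincides with $|B|$ up to $\tau$ and that the post-$\tau$ law is symmetric about $d/2$. Your subsequent detour through $\tv{\mu-\nu}$ and the triangle inequality is unnecessary---once you have $\mu([0,d/2])=\tfrac12+\tfrac12\mathbb{P}(\tau>t)$ you can compare to $\sigma([0,d/2])=\tfrac12$ directly, exactly as the paper does---but it is not wrong. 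Your probabilistic route has the virtue of avoiding any PDE machinery and making the appearance of $F_d(4t,0)$ transparent via Brownian scaling; the paper's Fourier computation has the virtue of yielding $V(t,x)$ for all $x$ at once.
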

\begin{proof}
We will defer the proof to Section 3.
\end{proof}
Notice that the leading $\exp\left(-\pi^{2}t/2d^{2}\right)$ rate
in $\FF{4t}{\cdot}$ is the same as the rate given by Bebendorf in
Equation \ref{eq:bebendorf}. This is no accident. Both quantities
reflect the spectral gap for the Neumann Laplacian on the interval
$[0,d]$, namely $\pi/d$.

The author's particular interest in this problem arose from a question
about hitting probabilities. Let $A,B$ denote two open disjoint subsets
of $\Omega$. Let $T=\inf\left\{ t:\ X_{t}\in A\cup B\right\} $,
and consider the problem of estimating $u(x)=\mathbb{P}(X_{T}\in\partial A|X_{0}=x)$.
In general it can be quite tricky to analyze $u$. However, there
are some circumstances in which it simplifies considerably. Let $x\in\Omega$
denote some point such that $\mathbb{P}(T>t|X_{0}=x)\approx1$ and
$\tv{\pt tx{\cdot}-\sigma}\approx0$. Then it is easy to show that
$u(x)\approx\int u(y)\sigma(dy)$. This can greatly simplify both
the numerical and the analytic investigation of $u$. It was for this
reason that we wanted to closely investigate the rate of ergodicity.

The remainder of this article is divided into three sections: 
\begin{enumerate}
\item \setcounter{enumi}{1}\emph{Known results. }We give a rigorous definition
for reflecting Brownian motion in a convex set and formalize some
aspects of our introductory exposition. We summarize known results,
look at the equations governing $\pt tx{dy}$, and see how the work
by Bebendorf yields the rate of convergence found in Equation \ref{eq:bebendorf}.
Finally, we will examine a coupling idea whose first rigorous construction
is due to Atar and Burdzy (cf. \cite{atar2004neumann}). 
\item \emph{Application of known results. }Here we prove our main theorem,
using the coupling construction of Atar and Burdzy. 
\item \emph{Conclusions. }We will consider possible directions for future
research. 
\end{enumerate}

\section{Known results}

The theory of reflected Brownian motion in convex sets is fairly well-developed.
To get a sense of this history, we will here recall Tanaka's early
work on the subject: 
\begin{defn}
\label{def:RBM}Let $\Omega\subset\mathbb{R}^{n}$ denote an open
bounded set. Let $\left\{ W_{t}\right\} _{t\geq0}$ denote an $n$-dimensional
Brownian motion adapted to $\left\{ \mathcal{F}_{t}\right\} _{t\geq0}$.
Let us say there exists 
\end{defn}

\begin{itemize}
\item a continuous process $\left\{ X_{t}\right\} _{t\geq0}\subset\bar{\Omega}$
which is adapted to $\left\{ \mathcal{F}_{t}\right\} _{t\geq0}$ 
\item a positive locally finite random measure $\mu$ on $[0,\infty)$ 
\item a random function $\boldsymbol{n}:\ \mathbb{R}^{+}\rightarrow\mathbb{R}^{n}$ 
\end{itemize}
such that

\begin{gather*}
X_{t}=W_{t}+\int_{0}^{t}\boldsymbol{n}(s)\mu(ds)\\
\mu\left(\left\{ t:\ X_{t}\notin\partial\Omega\right\} \right)=0
\end{gather*}
for every $t$ and\textit{ }\textbf{\textit{$\boldsymbol{n}(t)$ }}is
a normal vector of a supporting hyperplane\footnote{That is, $\left|\boldsymbol{n}\right|=1$ and $\Omega\subset\left\{ y:\ \left\langle \boldsymbol{n},y-X_{t}\right\rangle \geq0\right\} $}\textit{
of $\Omega$} at the point $X_{t}$ for $\mu$-almost-every value
of $t$. Then we will call $X$ a \textbf{\textit{\emph{reflecting
Brownian motion in $\Omega$ driven by $W$.}}}\emph{ }
\begin{thm}
\label{thm:(Tanaka's-1979).-Let}Let $W$ denote any $n$-dimensional
Brownian motion with $W_{0}\in\Omega$. If $\Omega\subset\mathbb{R}^{n}$
is convex then there is a pathwise-unique reflecting Brownian motion
in $\Omega$ driven by $W$. 
\end{thm}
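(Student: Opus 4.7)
The plan is to work pathwise, since the identity $X_t = W_t + \int_0^t \boldsymbol{n}(s)\,\mu(ds)$ involves no stochastic integral: I would fix a Brownian sample path and solve a deterministic Skorokhod-type problem, then bundle the pathwise solutions into an adapted process. Uniqueness I would handle first, to see exactly how convexity enters; existence I would attack via a Moreau--Yosida penalty approximation.

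For pathwise uniqueness, given two candidate solutions $(X^i, \mu^i, \boldsymbol{n}^i)$, $i = 1, 2$, driven by the same $W$, the difference $X^1 - X^2$ has locally finite variation, so the chain rule for functions of bounded variation (no It\^o correction) gives
\[
|X^1_t - X^2_t|^2 = 2\int_0^t \langle X^1_s - X^2_s,\, \boldsymbol{n}^1(s)\rangle\,\mu^1(ds) - 2\int_0^t \langle X^1_s - X^2_s,\, \boldsymbol{n}^2(s)\rangle\,\mu^2(ds).
\]
The supporting-hyperplane condition at $X^1_s \in \partial\Omega$, applied to $X^2_s \in \bar\Omega$, forces the first integrand to be nonpositive, and symmetrically for the second; since $X^1_0 = X^2_0 = W_0$, the two solutions coincide.

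For existence, $\partial\Omega$ need not be smooth so classical reflected-SDE theory does not apply directly. Writing $\Pi$ for the metric projection onto $\bar\Omega$, which is $1$-Lipschitz by convexity, I would let $X^\varepsilon$ be the pathwise-unique strong solution of the globally Lipschitz SDE $dX^\varepsilon_t = dW_t - \varepsilon^{-1}(X^\varepsilon_t - \Pi X^\varepsilon_t)\,dt$, and set $\mu^\varepsilon(dt) = \varepsilon^{-1}|X^\varepsilon_t - \Pi X^\varepsilon_t|\,dt$ with $\boldsymbol{n}^\varepsilon_t = (\Pi X^\varepsilon_t - X^\varepsilon_t)/|X^\varepsilon_t - \Pi X^\varepsilon_t|$. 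The variational inequality $\langle x - \Pi x,\, \Pi x - y\rangle \geq 0$ for $y \in \bar\Omega$ yields a priori bounds showing $\mathbb{E}\sup_{s \leq t}|X^\varepsilon_s - \Pi X^\varepsilon_s|^2 \to 0$ and that $\{\mu^\varepsilon\}$ is uniformly tight on compact intervals; extracting a subsequential limit produces a candidate $(X, \mu, \boldsymbol{n})$ with $X_t \in \bar\Omega$. Pathwise uniqueness then promotes the subsequential convergence to a genuine strong solution.

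The principal obstacle is the existence step: one must verify that the limit $\mu$ is a bona fide locally finite positive measure supported on $\{s : X_s \in \partial\Omega\}$ and that $\boldsymbol{n}$ realizes a measurable selection of supporting-hyperplane normals at $X_s$. The enabling fact, which lets the argument pass to the limit without any regularity assumption on $\partial\Omega$, is that for a convex set the normal-cone map $x \mapsto N_{\bar\Omega}(x)$ has closed graph, so normality survives the limit even though the boundary may have corners and the normal direction need not be pointwise single-valued.
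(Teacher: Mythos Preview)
Your sketch is correct: the uniqueness argument via the supporting-hyperplane inequality and the existence argument via a Moreau--Yosida penalty are precisely the ingredients of Tanaka's original construction. The paper itself does not reproduce a proof but simply cites \cite{tanaka1979stochastic}, and what you have outlined is a faithful summary of that classical argument, so there is no discrepancy to discuss.
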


\begin{proof}
Cf. \cite{tanaka1979stochastic}.
\end{proof}
Here we summarize some well-known facts about the process $X$. 
\begin{lem}
{[}Properties of Reflected Brownian motion in a convex set $\Omega${]}
\label{lem:(Properties-of-Reflected} Let $W$ denote an $\left\{ \mathcal{F}_{t}\right\} _{t\geq0}$-adapted
Brownian motion and let $X$ denote a reflecting Brownian motion in
$\Omega$ driven by $W$. If $\Omega$ is convex, then 
\begin{enumerate}
\item $\left\{ X_{t}\right\} _{t\geq0}$ is a strongly $\left\{ \mathcal{F}_{t}\right\} _{t\geq0}$-adapted
homogeneous Markov process. Let $\left\{ \pt tx{dy}\right\} _{t\geq0,x\in\bar{\Omega}}$
denote the transition measures of this process. The process $X$ is
reversible with respect to $\sigma$, i.e. $\int_{x\in A}\int_{y\in B}\sigma(dx)\pt tx{dy}=\int_{x\in B}\int_{y\in A}\sigma(dx)\pt tx{dy}$.
In particular, $\sigma$ is a stationary distribution of $X$. 
\item $X$ is uniformly ergodic with stationary distribution $\sigma$,
i.e. 
\[
\lim_{t\rightarrow\infty}\sup_{x}\tv{\pt tx{\cdot}-\sigma}=0
\]
\item Let $\lambda\geq0$ denote any constant such that 
\[
\int fdx=0\implies\lambda\ltn f\leq\sqrt{\int\left|\nabla f(x)\right|^{2}dx}
\]
for weakly differentiable functions $f:\ \Omega\rightarrow\mathbb{R}$.
Then for any density $g\in\mathscr{L}^{2}$ (i.e. $g\geq0$ and $\int g(x)dx=1$),
we have that 
\[
\left|\mathbb{E}\left[f(X_{t})|X_{0}\sim\rho\right]-\mathbb{E}\left[f(Y)\right]\right|\leq\exp\left(-\frac{1}{2}\lambda^{2}t\right)\ltn{g-\frac{1}{\mathrm{Vol}(\Omega)}}\ltn f
\]
where $Y\sim\sigma$ and $\rho(dx)=g(x)dx$. 
\end{enumerate}
\end{lem}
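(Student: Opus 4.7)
The plan has three essentially independent components, one for each numbered item, all leveraging Theorem \ref{thm:(Tanaka's-1979).-Let}.

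For (1), the strong Markov property and time-homogeneity fall out of the pathwise uniqueness of Tanaka's construction: the Skorokhod problem depends only on the driving Brownian motion's future increments, so restarting the driving noise at a stopping time $\tau$ with $\{W_{\tau+s}-W_\tau\}_{s\ge 0}$ produces the reflected motion from $X_\tau$. Reversibility with respect to $\sigma$ is cleanest via the associated Dirichlet form $\mathcal{E}(f,g)=\tfrac{1}{2}\int\nabla f\cdot\nabla g\,dx$ on $W^{1,2}(\Omega)$: closing this symmetric form yields a self-adjoint semigroup on $L^{2}(\sigma)$ whose associated diffusion satisfies the Skorokhod decomposition of Definition \ref{def:RBM}, and symmetry of $\mathcal{E}$ is equivalent to reversibility, with $\sigma$ immediately stationary.

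For (2), I would invoke a Doeblin-type minorization: by compactness of $\bar\Omega$ together with strict positivity and continuity of the transition density at any fixed positive time, there exist $t_0>0$ and $\epsilon>0$ with $\pt{t_0}{x}{dy}\ge\epsilon\,\sigma(dy)$ uniformly in $x\in\bar\Omega$. The standard Doeblin coupling then delivers the geometric bound $\sup_{x}\tv{\pt{t}{x}{\cdot}-\sigma}\le(1-\epsilon)^{\lfloor t/t_{0}\rfloor}$, which tends to zero.

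For (3), I would run the standard Dirichlet-form argument for $L^{2}$ decay under a Poincar\'e inequality. Writing $P_tf(x)=\int f(y)\pt{t}{x}{dy}$ and using that $P_t$ preserves constants while $g-\tfrac{1}{\mathrm{Vol}(\Omega)}$ integrates to zero, one rewrites
\[
\mathbb{E}[f(X_{t})\mid X_0\sim\rho]-\mathbb{E}[f(Y)]=\int\bigl(g(x)-\tfrac{1}{\mathrm{Vol}(\Omega)}\bigr)\,P_t(f-c)(x)\,dx
\]
for any constant $c$; choose $c=\int f\,d\sigma$ so that $h_{t}:=P_t(f-c)$ is mean-zero for every $t$. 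The generator identification from (1) combined with integration by parts (the Neumann boundary term vanishes) gives $\tfrac{d}{dt}\ltn{h_{t}}^{2}=-\int|\nabla h_{t}|^{2}dx\le-\lambda^{2}\ltn{h_{t}}^{2}$, where the Poincar\'e inequality applies because $h_{t}$ is mean-zero. Gr\"onwall yields $\ltn{h_{t}}\le e^{-\lambda^{2}t/2}\ltn{f-c}\le e^{-\lambda^{2}t/2}\ltn{f}$, the last step because $c$ is the orthogonal projection of $f$ onto constants. Cauchy--Schwarz on the displayed identity then closes the argument.

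The main obstacle, as I see it, lies in part (1): rigorously identifying the generator of $X$ with the Neumann Laplacian when $\partial\Omega$ is only convex (hence Lipschitz but possibly not smooth). A direct It\^o-based verification wants $C^{2}$ test functions with $\partial_\nu f=0$, which is awkward without boundary regularity. The Dirichlet-form route circumvents this by characterizing the reflected diffusion intrinsically through $\mathcal{E}$, and convexity provides enough Sobolev regularity on $\Omega$ for the Fukushima--\={O}shima--Takeda framework to identify the resulting Hunt process with the Tanaka-constructed $X$.
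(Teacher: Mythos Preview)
Your proposal is correct and, for parts (1) and (3), follows essentially the same Dirichlet-form route as the paper: identify $X$ with the Hunt process associated to $\mathscr{E}(f,g)=\tfrac12\int\nabla f\cdot\nabla g\,dx$ (the paper invokes Bass--Hsu together with Tanaka's uniqueness to make this identification on a merely Lipschitz boundary), deduce the strong Markov property and reversibility from self-adjointness of the associated semigroup, and for (3) use the Poincar\'e hypothesis to obtain exponential $L^{2}$ decay of mean-zero functions followed by Cauchy--Schwarz. Your differentiation-plus-Gr\"onwall presentation of (3) is the infinitesimal version of the paper's spectral-calculus statement $\ltn{e^{-A^{2}t}f}\le e^{-\lambda^{2}t/2}\ltn{f}$ for mean-zero $f$; the two are interchangeable.

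The genuine divergence is in part (2). The paper simply cites a result of Burdzy and Chen establishing uniform ergodicity for processes weakly determined by $\mathscr{E}$ on convex domains. Your Doeblin-minorization argument is more self-contained and in fact delivers more: an explicit geometric rate $(1-\epsilon)^{\lfloor t/t_{0}\rfloor}$ rather than the bare qualitative limit. The price is that you must supply strict positivity and joint continuity (in $x$ and $y$) of the Neumann heat kernel on a domain that is only Lipschitz; this is true for bounded convex $\Omega$ but is not entirely trivial to prove from scratch, whereas the paper sidesteps the issue by outsourcing the entire claim to the literature.
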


\begin{proof}
These results are well-known. We relate them at a high level here
for the convenience of the reader.

The key is to grasp the connection between $X$ and a certain so-called
``Dirichlet form,'' 
\[
\mathscr{E}(f,g)=\frac{1}{2}\int_{\Omega}\left\langle \nabla f,\nabla g\right\rangle dx
\]
which is understood as a bilinear form on the Sobolev space $H^{1}$
of weakly differentiable functions on $\Omega$. The arc of this connection
is the content of the treatise \cite{fukushima2010dirichlet}. We
will only sketch it briefly in this paragraph. Let $\mathscr{L}^{2}(\Omega)$
denote the space of square-integrable measurable functions on $\Omega$,
equipped with the inner product $\left\langle f,g\right\rangle _{\mathscr{L}^{2}}=\int f(x)g(x)dx$.
One can find a unique non-negative definite operator $A:\ H^{1}\rightarrow\mathscr{L}^{2}$
such that $\mathscr{E}(f,g)=\int\left(Af\right)\left(Ag\right)dx$.
It turns out that $A$ is self-adjoint. One one can thus obtain a
family of operators of the form $T_{t}:\ \mathscr{L}^{2}\rightarrow\mathscr{L}^{2}$,
uniquely defined as 
\[
T_{t}=e^{-A^{2}t}
\]
Note that $e^{-A^{2}t}$ is defined on all of $\mathscr{L}^{2}$ even
though $A$ is only defined on $H^{1}$; we refer the reader to \cite{schmudgen2012unbounded}
for a very clear introduction to these considerations. If $\Omega$
has lipschitz boundary (i.e. the boundary can locally be represented
as the epigraph of a lipschitz function), one can then (not-necessarily-uniquely)
define a strong Markov process $\left\{ Y_{t}\right\} _{t\geq0}$
such that 
\[
\mathbb{E}\left[f(Y_{t+s})|Y_{t}=y\right]-\left(T_{t}f\right)(y)
\]
almost surely with respect to Lebesgue measure, for every $f\in\mathscr{L}^{2}$.
In this case we may say that $Y$ is ``weakly determined'' by $\mathscr{E}$.

It is shown in \cite{bass1990semimartingale} that if $\Omega$ is
bounded with lipschitz boundary, then \emph{any} process which is
weakly determined by $\mathscr{E}$ will be a reflecting Brownian
motion driven by some Brownian motion $W$. It is also shown that
at least one such process exists. Since a bounded convex set automatically
has a lipschitz boundary (cf. Corollary 1.2.2.3 of \cite{grisvard2011elliptic})
and Tanaka showed that reflecting Brownian motion on a convex set
is uniquely defined, it follows that the process $X$ must be weakly
determined by $\mathscr{E}$.

This allows us to prove all three of our claims: 

Since \cite{fukushima2010dirichlet} shows that every process with
lipschitz boundary that is weakly determined by $\mathscr{E}$ is
a strong Markov process, it follows that $X$ is a strong Markov process.
The reversibility of $X$ with respect to $\sigma$ then follows from
the fact that $T_{t}$ is self-adjoint as an operator on $\mathscr{L}^{2}(\Omega,\sigma)$
(this, in turns follows from the fact that $A$ is self-adjoint). 

In \cite{burdzy2006traps} it is shown that if a process $\left\{ Y_{t}\right\} _{t\geq0}$
is weakly determined by $\mathscr{E}$ and $\Omega$ is convex, then
\[
\lim_{t\rightarrow\infty}\sup_{y}\sup_{A}\left|\mathbb{P}(Y_{t}\in A|Y_{0}=y)-\sigma(A)\right|=0
\]
Thus the same follows for our process, $X$. 

Let $\lambda$ denote any constant so that $\int fdx=0\implies\lambda^{2}\ltn f^{2}\leq\mathscr{E}(f,f)$.
Recall that we have said there is a unique operator $A:\ H^{1}\rightarrow\mathscr{L}^{2}$
such that $\mathscr{E}(f,g)=\int\left(Af\right)\left(Ag\right)dx$.
In particular, $\mathscr{E}(f,f)=\ltn{Af}^{2}$. We may thus rephrase
our understanding of $\lambda$ by saying that $\int fdx=0\implies\lambda\ltn f\leq\ltn{Af}$.
Using spectral methods it is thus straightforward to show that 
\begin{equation}
\int fdx=0\implies\ltn{e^{-A^{2}t}f}^{2}\leq e^{-\lambda^{2}t}\ltn f^{2}\label{eq:spectralboundofpoincare}
\end{equation}
Using this and Cauchy-Schwarz, one can readily show that 
\[
\left|\mathbb{E}\left[f(X_{t})|X_{0}\sim\rho\right]-\mathbb{E}\left[f(Y)\right]\right|\leq e^{-\frac{1}{2}\lambda^{2}}\ltn{g-\frac{1}{\mathrm{Vol}(\Omega)}}\ltn f
\]
\end{proof}
\begin{cor}
Let $Y\sim\sigma$, $\rho(dx)=g(x)dx$, $g\geq0$ and $\rho(\Omega)=1$.
Then 
\[
\left|\mathbb{E}\left[f(X_{t})|X_{0}\sim\rho\right]-\mathbb{E}\left[f(Y)\right]\right|\leq\exp\left(-\frac{1}{2}\left(\frac{\pi}{d}\right)^{2}t\right)\ltn{g-\sigma}\ltn f
\]
\end{cor}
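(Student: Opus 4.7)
The plan is to observe that this corollary is an immediate instance of part 3 of Lemma \ref{lem:(Properties-of-Reflected}, once we supply the correct value of the Poincar\'e constant $\lambda$ for a bounded convex $\Omega$ of diameter $d$. So the entire task reduces to exhibiting a constant $\lambda$ satisfying the hypothesis
\[
\int f\,dx = 0 \implies \lambda\ltn{f}\leq\sqrt{\int |\nabla f(x)|^{2}dx}
\]
for all weakly differentiable $f$, with $\lambda=\pi/d$.

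First I would invoke Bebendorf's theorem from \cite{bebendorf2003note}, which is precisely the statement that on any bounded convex $\Omega\subset\mathbb{R}^n$ of diameter $d$, the Neumann Poincar\'e inequality holds with sharp constant $d/\pi$: for every weakly differentiable $f$ with vanishing mean,
\[
\ltn{f}\leq\frac{d}{\pi}\sqrt{\int|\nabla f(x)|^{2}dx}.
\]
This gives exactly the hypothesis of part 3 of Lemma \ref{lem:(Properties-of-Reflected} with $\lambda=\pi/d$. Plugging this choice into the conclusion of that part yields the exponential rate $\exp(-\tfrac{1}{2}(\pi/d)^{2}t)$ advertised in the corollary.

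Finally, I would note that the norm $\ltn{g-\sigma}$ appearing on the right-hand side is to be read as $\ltn{g-1/\mathrm{Vol}(\Omega)}$, i.e. the $\mathscr{L}^{2}$ distance between the density $g$ of $\rho$ and the density of $\sigma$ with respect to Lebesgue measure; with this interpretation the bound is literally the conclusion of Lemma \ref{lem:(Properties-of-Reflected}(3). The only non-trivial ingredient is Bebendorf's inequality itself, which we simply cite; the rest is a direct specialization of a lemma already stated and proved above. There is no real obstacle here beyond correctly identifying and citing the sharp Poincar\'e constant on convex domains.
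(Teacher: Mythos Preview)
Your proposal is correct and matches the paper's own proof essentially verbatim: the paper simply cites \cite{bebendorf2003note} to obtain $\lambda=\pi/d$ and then invokes part~3 of Lemma~\ref{lem:(Properties-of-Reflected}. Your additional remark identifying $\ltn{g-\sigma}$ with $\ltn{g-1/\mathrm{Vol}(\Omega)}$ is a helpful clarification of the notation but not part of the argument itself.
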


\begin{proof}
The work in \cite{bebendorf2003note} shows that $\lambda=\pi/d$
fills the required role for statement 3 of Lemma \ref{lem:(Properties-of-Reflected}. 
\end{proof}
This last corollary gives a satisfying grip on the $\mathscr{L}^{2}$
ergodic convergence for Brownian motion in convex domains. Our endeavor
here is to complement this with a comparable analysis of the total
variation convergence.

Towards this end, we will be employ a coupling construction. That
is, we will construct a joint process $\left\{ X_{t},Y_{t}\right\} _{t\geq0}$
so that $X$ and $Y$ both carry the law of reflecting brownian motion,
but each has a different initial condition. We will construct them
in such a way that $\tau=\inf\left\{ t:\ X_{t}=Y_{t}\right\} $ is
almost surely finite:
\begin{thm}
\label{thm:atar}Fix any bounded convex set $\Omega$. Let $\left\{ W_{t}\right\} _{t\geq0}$
denote a brownian motion. Then there exists a solution to the equations
\begin{equation}
\begin{array}{ll}
X_{t} & =x+W_{t}+L_{t}\in\bar{\Omega}\\
Y_{t} & =y+Z_{t}+M_{t}\in\bar{\Omega}\\
Z_{t} & =W_{t}-\int_{0}^{t}2\eta_{s}\left\langle \eta_{s},dW_{s}\right\rangle \\
\eta_{t} & =\frac{X_{t}-Y_{t}}{\left|X_{t}-Y_{t}\right|}
\end{array}\qquad\qquad\qquad\begin{array}{ll}
\left|L\right|_{t} & =\int_{0}^{t}\mathbb{I}_{X_{s}\in\partial\Omega}d\left|L\right|_{s}<\infty\\
L_{t} & =-\int_{0}^{t}\boldsymbol{n}_{L}(s)d\left|L\right|_{s}\\
\left|M\right|_{t} & =\int_{0}^{t}\mathbb{I}_{Y_{s}\in\partial\Omega}d\left|M\right|_{s}<\infty\\
M_{t} & =-\int_{0}^{t}\boldsymbol{n}_{M}(s)d\left|M\right|_{s}
\end{array}\label{eq:atar}
\end{equation}
defined in a pathwise unique way up until the time $\tau=\inf\left\{ t:\ X_{t}=Y_{t}\right\} $.
Here $d\left|L\right|_{s}$ plays the role of the measure $\mu$ in
Definition \ref{def:RBM}; we require that \textbf{$\boldsymbol{n}_{L}(s)$
}is a normal vector of supporting hyperplanes of $\Omega$ at $X_{s}$,
$d\left|L\right|_{s}$-almost surely. Likewise for $\boldsymbol{n}_{M},Y_{s},d\left|M\right|_{s}$.
In particular, let
\[
\tilde{Y}_{t}=\begin{cases}
Y_{t} & t\leq\tau\\
X_{t} & t\geq\tau
\end{cases}
\]
Then $\left\{ \left(X_{t},\tilde{Y}_{t}\right)\right\} _{t\geq0}$
constitute a strongly Markovian process, and both $X$ and $Y$ are
reflecting Brownian motions.
\end{thm}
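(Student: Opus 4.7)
The plan is to exploit Tanaka's theorem (Theorem \ref{thm:(Tanaka's-1979).-Let}) twice: once to produce $X$ as the reflecting Brownian motion driven by $W$ starting at $x$, and once to produce $Y$ as a reflecting Brownian motion driven by the auxiliary process $Z$ starting at $y$. What must be handled carefully is the self-consistent definition of $Z$, since $Z$ is expressed in terms of $\eta_t = (X_t-Y_t)/|X_t-Y_t|$, which itself depends on $Y$. The whole system is therefore built jointly, and only up to the coupling time $\tau = \inf\{t:X_t=Y_t\}$, at which point $\eta$ ceases to be defined.

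The first step is the kinematic observation that, for any $\{\mathcal{F}_t\}$-adapted unit-vector process $\eta$, the process $Z_t = W_t - \int_0^t 2\eta_s\langle \eta_s, dW_s\rangle$ is an $\{\mathcal{F}_t\}$-adapted Brownian motion. This is an immediate application of L\'evy's characterization: $Z$ is a continuous local martingale and a short It\^o computation yields $d\langle Z^i,Z^j\rangle_t = \delta_{ij}\,dt$. Consequently, once a self-consistent $(X,Y,\eta)$ has been built, both $X$ and $Y$ are automatically reflecting Brownian motions via Theorem \ref{thm:(Tanaka's-1979).-Let} applied to the drivers $W$ and $Z$.

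To build $(X,Y)$ up to $\tau$, I would use a regularization scheme. For each $\epsilon>0$ replace $\eta$ by $\eta^\epsilon_t = (X_t-Y^\epsilon_t)/(|X_t-Y^\epsilon_t|\vee\epsilon)$, which is globally Lipschitz in $(X,Y^\epsilon)$. The joint SDE for $(X,Y^\epsilon)$ then lives on the convex set $\Omega\times\Omega$ with a Lipschitz diffusion matrix and no drift, so the Lions-Sznitman theory of reflected SDEs on convex domains yields a pathwise-unique solution $(X,Y^\epsilon,L^\epsilon,M^\epsilon)$. Where $|X_t-Y^\epsilon_t|>\epsilon$ the regularized equation coincides with the original one; so for $\epsilon'<\epsilon$, pathwise uniqueness forces $(X,Y^{\epsilon'})$ and $(X,Y^\epsilon)$ to agree on $[0,\tau^\epsilon]$, where $\tau^\epsilon=\inf\{t:|X_t-Y^\epsilon_t|\leq\epsilon\}$. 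Sending $\epsilon\downarrow0$ and setting $\tau=\sup_\epsilon\tau^\epsilon$ produces a pathwise-unique solution on $[0,\tau)$, continuously extendable to $\tau$ with $X_\tau=Y_\tau$. Finiteness of $\tau$ can be verified by applying It\^o to $|X_t-Y_t|$: convexity gives $\langle X-Y,dL-dM\rangle\leq0$, and after the usual simplification the It\^o correction cancels the quadratic-variation correction, leaving $|X_t-Y_t|$ dominated by $|x-y|$ plus twice a one-dimensional Brownian motion, which reaches zero almost surely.

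The main obstacle throughout is precisely the discontinuity of $\eta$ on the diagonal $\{X=Y\}$, which rules out any direct appeal to standard reflected-SDE well-posedness and forces both the regularization above and the stopping at $\tau$. Once the construction on $[0,\tau]$ is in hand, defining $\tilde Y_t = Y_t$ for $t\leq\tau$ and $\tilde Y_t = X_t$ afterward extends the coupling by a trivial synchronous evolution; the strong Markov property of $X$ at $\tau$ together with the fact that $Y$ on $[0,\tau]$ is a reflecting Brownian motion driven by $Z|_{[0,\tau]}$ and starting at $y$ then makes the concatenation $\tilde Y$ itself a reflecting Brownian motion, and renders the joint process $(X,\tilde Y)$ strongly Markovian.
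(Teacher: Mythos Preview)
The paper does not actually prove this theorem: its ``proof'' consists of a citation to Atar and Burdzy together with the remark that bounded convex sets are admissible in the sense of Lions and Sznitman. Your proposal, by contrast, sketches an explicit construction. In that sense you are doing more than the paper does, and your outline is broadly sound: the L\'evy-characterization argument showing $Z$ is a Brownian motion is correct; the idea of regularizing $\eta$ to obtain a Lipschitz diffusion coefficient on the convex product domain $\Omega\times\Omega$ and then invoking Tanaka/Lions--Sznitman well-posedness is a standard and workable route; and the consistency-as-$\epsilon\downarrow 0$ argument is the right mechanism for passing to the limit.

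A few points deserve comment. First, the consistency step (``pathwise uniqueness forces $(X,Y^{\epsilon'})$ and $(X,Y^{\epsilon})$ to agree on $[0,\tau^{\epsilon}]$'') is stated rather briskly; making it rigorous requires a short localization argument, since the interval on which the two regularized systems coincide is itself defined in terms of the solutions. Second, your paragraph on finiteness of $\tau$ is correct but superfluous here: the theorem as stated only asserts existence and uniqueness up to $\tau$, not that $\tau<\infty$; the latter is handled separately in the paper (Lemma~\ref{lem:timing}). Third, the claim that the concatenation $\tilde Y$ is a reflecting Brownian motion needs one more line: one should exhibit the driving Brownian motion explicitly, namely $\tilde Z_t = Z_t$ for $t\le\tau$ and $\tilde Z_t = Z_\tau + W_t - W_\tau$ for $t>\tau$, and check via the strong Markov property of $W$ at $\tau$ and L\'evy's criterion that $\tilde Z$ is indeed a Brownian motion. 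With these details supplied, your argument would stand on its own, whereas the paper simply defers to the literature.
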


\begin{proof}
We refer the reader to the work of Atar and Burdzy in \cite{atar2004neumann}.
Note that although this article focuses on the case that $\partial\Omega$
is smooth, it also mentions that all of the reasoning goes through
for any set which is ``admissible'' according to the lights of work
by Lions and Sznitman, \cite{lions1984stochastic}. Convex sets are
indeed ``admissible'' according to Remark 3.1 of the work by Lions
and Sznitman. 

We emphasize that even though $X$ and $Y$ in this theorem are profoundly
coupled, individually they both behave like Brownian motions trapped
inside $\Omega$ by reflecting boundaries. It is also worth emphasizing
that there are two completely different conceptual ``reflections''
at play here: 
\end{proof}
\begin{enumerate}
\item The normally reflecting boundaries keep $X,Y$ inside $\Omega$ 
\item The mirror coupling causes $Y$ to generally behave like the mirror
image of $X$, reflected over a plane halfway between $X$ and $Y$. 
\end{enumerate}
These two kinds of reflections may interact when $X$ or $Y$ hits
a point in $\partial\Omega$. In this case the direction of reflection
(which is mathematically expressed as $\eta_{t}$) may rotate.

\section{Application of known results}

To show our main theorem, we must understand the distribution of the
coupling time in the mirror construction from Theorem \ref{thm:atar}.
Exact calculation of this distribution may be impossible, but some
bounds are straightforward to obtain: 
\begin{lem}
\label{lem:timing}Let $\Omega,X,Y,\tilde{Y},\tau$ be as in Theorem
\ref{thm:atar}. Let $d$ denote the diameter of $\Omega$. Then 
\begin{align*}
\mathbb{P}_{x}\left(t\leq\tau\right) & \leq\FF{4t}{d-\left|x-y\right|}
\end{align*}
\end{lem}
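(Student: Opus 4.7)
The plan is to dominate the distance process $D_t := |X_t - Y_t|$ pathwise by a one-dimensional Skorohod-reflected Brownian motion whose absorption time at $0$ has exactly the $F_d$ distribution. First I would apply It\^o's formula to $D_t = \sqrt{|X_t-Y_t|^2}$ on the stochastic interval $\{t<\tau\}$, where $D_t>0$. From Theorem~\ref{thm:atar} we have $d(X_t-Y_t)=2\eta_t\,d\beta_t+dL_t-dM_t$, where $\beta_t:=\int_0^t\langle\eta_s,dW_s\rangle$ is a standard one-dimensional Brownian motion by L\'evy's characterization (since $|\eta_s|=1$). The radial drift arising from $d[|X-Y|^2]_t = 16\,D_t^2\,dt$ exactly cancels the It\^o correction, leaving the clean decomposition
\[
dD_t \;=\; 2\,d\beta_t \;+\; \frac{1}{D_t}\,\langle X_t - Y_t,\; dL_t - dM_t\rangle,
\]
and the $1/D_t$ factor is harmless because each inner product already carries a factor of $D_t$.

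Next I would sign the boundary terms using convexity. The supporting-hyperplane property at $X_s\in\partial\Omega$, with outward normal $\boldsymbol{n}_L(s)$, gives $\langle\boldsymbol{n}_L(s),\,Y_s-X_s\rangle\leq 0$ (because $Y_s\in\bar\Omega$), so $\langle X_s-Y_s,\,dL_s\rangle = -\langle X_s-Y_s,\,\boldsymbol{n}_L(s)\rangle\,d|L|_s\leq 0$; the $M$-term is handled symmetrically. Thus $D_t = D_0 + 2\beta_t - V_t$ for $t\leq\tau$, where $V_t\geq 0$ is nondecreasing. Because $\Omega$ has diameter $d$, we also have $D_t\leq d$ deterministically, which forces $V_t \geq (D_0 + 2\beta_t - d)_+$ at every time, and by monotonicity $V_t \geq U_t := \sup_{s\leq t}(D_0 + 2\beta_s - d)_+$.

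The third step is the Skorohod comparison and identification. The process $\tilde D_t := D_0 + 2\beta_t - U_t$ is precisely the Skorohod reflection of $D_0+2\beta_t$ at the upper barrier $d$, i.e.\ a Brownian motion of variance rate $4$ starting at $|x-y|$ and reflecting at $d$. The inequality $V_t \geq U_t$ yields $D_t\leq\tilde D_t$ pointwise on $\{t\leq\tau\}$, and together with $D_t\geq 0$ this forces $\tau\leq\tau_0:=\inf\{t:\tilde D_t=0\}$, so $\mathbb{P}_x(t\leq\tau)\leq\mathbb{P}_x(t\leq\tau_0)$. To evaluate the right-hand side, the folding map $b\mapsto d-|b|$ transports a Brownian motion $B$ of variance rate $4$ on $\mathbb{R}$ starting at $d-|x-y|$ and absorbed at $\pm d$ to a process with the same law as $\tilde D$: reflection at $0$ for $|B|$ becomes reflection at $d$ for $\tilde D$, and absorption at $\pm d$ for $B$ becomes absorption at $0$ for $\tilde D$, with matching initial conditions. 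The survival probability of $B$ in $(-d,d)$ over time $t$ is $F_d(4t,\,d-|x-y|)$ by definition, with the factor $4$ absorbing the variance rate.

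I expect the main obstacle to be the sign-chasing in the second step: one must carefully reconcile the orientation conventions for $\boldsymbol{n}_L$ and $\boldsymbol{n}_M$ between Definition~\ref{def:RBM} and Theorem~\ref{thm:atar}, and then use convexity (via the supporting-hyperplane characterization) to ensure that both boundary contributions push $D$ downward. Once those signs are nailed down, the It\^o calculation, the Skorohod comparison, and the folding identification are all essentially routine.
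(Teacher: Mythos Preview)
Your proposal is correct and follows essentially the same route as the paper: decompose $D_t=|X_t-Y_t|$ into a Brownian martingale part $2\beta_t$ plus a nonincreasing boundary term (signed via the supporting-hyperplane property of convex $\Omega$), use the constraint $D_t\le d$ to bound that boundary term by the Skorohod regulator at level $d$, and then compare $D_t$ with a one-dimensional reflected Brownian motion whose absorption time at $0$ has the $F_d$ survival function. The only cosmetic differences are that the paper invokes Dambis--Dubins--Schwarz where you use L\'evy's characterization (equivalent here since $\langle\beta\rangle_t=t$ exactly), and the paper cites a reference for the hitting-time law where you spell out the folding map $b\mapsto d-|b|$; your anticipated sign-chasing concern is real, as the paper's own conventions for $\boldsymbol{n}_L,\boldsymbol{n}_M$ require care, but the conclusion that the boundary contribution is nonincreasing is the correct one.
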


\begin{proof}
Let us consider 
\[
R_{t}=\left|X_{t\wedge\tau}-Y_{t\wedge\tau}\right|
\]
We now apply the fact that Equation \ref{eq:atar} holds for for $t\leq\tau$.
This yields that 
\[
R_{t}=\left|x-y\right|+\int_{0}^{t\wedge\tau}\left\langle \eta_{s},\boldsymbol{n}_{M}(s)d\left|M\right|_{s}-\boldsymbol{n}_{L}(s)d\left|L\right|_{s}\right\rangle +2\left\langle \eta_{s},dW_{s}\right\rangle 
\]
Dambis-Dubins-Schwarz then yields that we can find some one-dimensional
Brownian motion $B$ such that 
\[
R_{t}=\left|x-y\right|+\int_{0}^{t\wedge\tau}\left\langle \eta_{s},\boldsymbol{n}_{M}(s)d\left|M\right|_{s}-\boldsymbol{n}_{L}(s)d\left|L\right|_{s}\right\rangle +B_{4\left(t\wedge\tau\right)}
\]
Let us now inspect 
\[
\Phi_{t}=\int_{0}^{t\wedge\tau}\left\langle \eta_{s},\boldsymbol{n}_{M}(s)d\left|M\right|_{s}-\boldsymbol{n}_{L}(s)d\left|L\right|_{s}\right\rangle 
\]
Let us focus on three properties of this object: 
\begin{enumerate}
\item $t\mapsto\Phi_{t}$ is monotone non-increasing. Indeed, recall that
$\boldsymbol{n}_{M}(s)$ is a supporting hyperplane of $\Omega$ at
$X_{s}$, $d\left|M\right|_{s}$-almost-surely. That is, 
\[
\Omega\subset\left\{ y:\ \left\langle \boldsymbol{n}_{M}(s),y-X_{t}\right\rangle \geq0\right\} 
\]
So certainly 
\[
\left\langle \eta_{s},\boldsymbol{n}_{M}(s)\right\rangle =\left\langle \frac{X_{s}-Y_{s}}{\left|X_{s}-Y_{s}\right|},\boldsymbol{n}_{M}(s)\right\rangle \leq0
\]
The same arguments apply to $-\left\langle \eta_{s},\boldsymbol{n}_{L}(s)\right\rangle $. 
\item Since the diameter of $\Omega$ is $d$, we have that $R_{t}\leq d$
for all $t$. Put another way, $\Phi_{t}\leq d-B_{4t}-\left|x-y\right|$. 
\item $\Phi_{0}=0$. 
\end{enumerate}
Putting these facts together, we obtain the overall bound of 
\[
\Phi_{t}\leq\inf_{s\leq t\wedge\tau}\left(d-B_{4s}-\left|x-y\right|\right)\wedge0
\]
And thus 
\[
R_{t}\leq\left|x-y\right|+B_{4\left(t\wedge\tau\right)}+\inf_{s\leq t\wedge\tau}\left(d-B_{4s}-\left|x-y\right|\right)\wedge0\triangleq\tilde{R}_{4t}
\]
This is useful because the law of $\tilde{R}_{t}$ is well-understood.
It is that of a one-dimensional Brownian motion with reflection at
the point $d$, initialized at $\tilde{R}_{0}=\left|x-y\right|$ (cf.
the introduction of \cite{dupuis1991lipschitz} for a useful exposition
on this point), and stopped at the time when $\tilde{R}_{t}=0$. In
particular, letting $\tilde{\tau}=\inf\left\{ t:\ \tilde{R}_{t}=0\right\} $,
we can apply the results of \cite{hu2012hitting} to argue that 
\begin{align*}
\mathbb{P}_{x}\left(t\leq\tilde{\tau}\right) & =\FF t{d-\left|x-y\right|}
\end{align*}
Moreover, since $R_{t}\leq\tilde{R}_{4t}$ for every $t$, it follows
that $\tilde{\tau}\geq4\tau$. Our result follows immediately.
\end{proof}
This leads to our main theorem, which we restate here for the convenience
of the reader: 
\begin{thm*}
Let $\Omega\subset\mathbb{R}^{n}$ bounded, open, convex, with diameter
$d$. Let $\left\{ \pt tx{dy}\right\} _{t\geq0,x\in\Omega}$ denote
the transition measures of Brownian motion trapped inside $\Omega$
by normally-reflecting barriers. Then 
\begin{align*}
\tv{\pt tx{\cdot}-\pt ty{\cdot}} & \leq\FF{4t}{d-\left|x-y\right|}\\
\tv{\pt tx{\cdot}-\sigma} & \leq\int\FF{4t}{d-\left|x-y\right|}\sigma(dy)\leq\FF{4t}0
\end{align*}
This very last bound is tight within a factor of 2. In, particular,
taking the special case that $\Omega=\left[0,d\right]\subset\mathbb{R}$,
we have that $\FF{4t}0\leq2\tv{\pt t0{\cdot}-\sigma}$.
\end{thm*}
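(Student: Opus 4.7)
The first two bounds reduce almost immediately to the coupling of Theorem 3 plus a stationarity manipulation. For any $x, y$, take the Atar-Burdzy coupling $(X, \tilde{Y})$ with $X_0 = x$ and $\tilde{Y}_0 = y$; the coupling inequality $\tv{\pt tx\cdot - \pt ty\cdot} \leq \mathbb{P}(\tau > t)$ combined with Lemma \ref{lem:timing} gives the first line. For the second line, stationarity of $\sigma$ yields $\pt tx\cdot - \sigma = \int[\pt tx\cdot - \pt ty\cdot]\sigma(dy)$; the triangle inequality in integral form (equivalently, convexity of $\tv{\cdot}$) then converts this into $\int \FF{4t}{d-|x-y|}\sigma(dy)$. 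The terminal inequality $\int \FF{4t}{d-|x-y|}\sigma(dy) \leq \FF{4t}{0}$ is a pointwise statement: $\FF sk$ is the probability that a free Brownian motion starting at $k$ remains in $(-d, d)$ for time $s$, and this is trivially largest at the center $k = 0$ (formally, $\FF s0 - \FF sk$ solves the Dirichlet heat equation on $(-d, d)$ with non-negative boundary data and zero initial data, hence is non-negative by the maximum principle).

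For the tightness assertion, I would specialize to $\Omega = [0, d]$ and examine $v(t, y) := \pt t0y - \pt tdy$. The symmetry $y \mapsto d - y$ of $\Omega$ swaps the two initial conditions, so $v(t, d-y) = -v(t, y)$; in particular $v(t, d/2) = 0$ for all $t \geq 0$. The restriction $v|_{[0, d/2]}$ therefore solves the heat equation with Neumann data at $0$ (inherited from the reflecting boundary of $p$), Dirichlet data at $d/2$ (from the antisymmetry just noted), and initial data $\delta_0$ (since $p(0, d, y) = 0$ for $y < d/2$). This boundary-value problem is precisely the forward equation for a Brownian motion started at $0$, reflected at $0$, and killed upon reaching $d/2$ --- equivalently, for $|B_s|$ killed when $|B_s|$ first equals $d/2$. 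Hence $v \geq 0$ on $[0, d/2]$ and
\[
\int_0^{d/2} v(t, y)\, dy = \mathbb{P}\bigl(|B_s| < d/2,\ \forall s \leq t\bigr) = F_{d/2}(t, 0) = \FF{4t}{0},
\]
the last equality by the Brownian scaling $B_u \mapsto 2 B_{u/4}$. Antisymmetry of $v$ then gives $\tv{\pt t0\cdot - \pt td\cdot} = \int_0^{d/2} v(t, y)\, dy = \FF{4t}{0}$, and the triangle inequality together with the symmetry $\tv{\pt t0\cdot - \sigma} = \tv{\pt td\cdot - \sigma}$ yields $\FF{4t}{0} \leq 2\tv{\pt t0\cdot - \sigma}$.

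I expect the main subtlety to be the rigorous identification of $v|_{[0, d/2]}$ with the fundamental solution of the mixed Neumann/Dirichlet heat equation with $\delta_0$ initial data: one must handle the distributional initial condition and verify the compatibility of the boundary conditions carefully. Once that identification is in place, the remainder is an assembly of well-understood pieces --- two probabilistic interpretations (the survival probability of the killed reflected Brownian motion, and $F_d$ itself as a survival probability of a free Brownian motion) together with Brownian scaling. Notably, the alternating series defining $F_d$ never needs to be summed explicitly; everything is done at the level of densities and survival probabilities.
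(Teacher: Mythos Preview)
Your argument for the first three inequalities is essentially identical to the paper's: the Atar--Burdzy coupling plus Lemma~\ref{lem:timing} for the first line, stationarity plus the integral triangle inequality for the second, and the monotonicity $\FF sk \leq \FF s0$ for the third.

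For the tightness assertion your route is correct but genuinely different from the paper's. The paper works directly with the single function $V(t,x)=\mathbb{P}(X_t\le d/2\mid X_0=x)$, expands the indicator $\mathbb{I}_{x\le d/2}$ in the Neumann cosine basis on $[0,d]$, and reads off $V(t,0)=\tfrac12+\tfrac12\FF{4t}{0}$; the bound $\tv{\pt t0\cdot-\sigma}\ge V(t,0)-\tfrac12$ is then immediate. You instead look at the density difference $v(t,y)=p(t,0,y)-p(t,d,y)$, use the reflection symmetry $y\mapsto d-y$ to force a Dirichlet condition at $d/2$, identify $v|_{[0,d/2]}$ with the sub-probability density of reflected-at-$0$, killed-at-$d/2$ Brownian motion, and recover $\FF{4t}{0}$ as the corresponding survival probability via Brownian scaling. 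The two computations are secretly evaluating the same integral (since $V(t,0)-\tfrac12=\tfrac12\bigl(V(t,0)-V(t,d)\bigr)=\tfrac12\int_0^{d/2}v(t,y)\,dy$ by symmetry), but the paper's Fourier approach is shorter and more explicit, while yours is more probabilistic and never manipulates the series for $F_d$. The subtlety you flag---matching $v|_{[0,d/2]}$ to the mixed Neumann/Dirichlet fundamental solution with $\delta_0$ data---is real but easily discharged: both sides have the same eigenfunction expansion $\tfrac{4}{d}\sum_{m\ge0}e^{-\frac12((2m+1)\pi/d)^2 t}\cos\!\bigl((2m+1)\pi y/d\bigr)$, since subtracting $p(t,d,\cdot)$ from $p(t,0,\cdot)$ kills exactly the even Neumann modes and doubles the odd ones, which are precisely the mixed-boundary eigenfunctions on $[0,d/2]$.
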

\begin{proof}
The total variation distance between $\pt tx{\cdot}-\pt ty{\cdot}$
is easy to bound using Lemma \ref{lem:timing}. Recall that $X_{t},\tilde{Y}_{t}$
were both reflecting Brownian motions, with $X_{0}=x$, $\tilde{Y}_{0}=y$,
and $X_{t}=\tilde{Y}_{t}$ for $t\geq\tau$. The lemma then shows
that $\mathbb{P}\left(t<\tau\right)\leq\FF{4t}{d-\left|x-y\right|}$.
We thus obtain our first total variation bound: 
\begin{align*}
\tv{\pt tx{\cdot}-\pt ty{\cdot}} & =\sup_{A}\left|\mathbb{P}\left(X_{t}\in A\right)-\mathbb{P}\left(\tilde{Y}_{t}\in A\right)\right|\\
 & \leq\mathbb{P}\left(X_{t}\neq\tilde{Y}_{t}\right)\leq\mathbb{P}\left(t<\tau\right)\leq\FF{4t}{d-\left|x-y\right|}
\end{align*}
The second total variation bound then follows immediately from the
fact that $\sigma$ is the stationary distribution of the process,
i.e. $\int\pt txA\sigma(dx)=\sigma(A)$. The second total variation
bound is thus a kind of ``average'' of the first variation bound:
\begin{align*}
\tv{\pt tx{\cdot}-\sigma} & =\sup_{A}\left|\pt txA-\int\pt tyA\sigma(dy)\right|\\
 & \leq\int\tv{\pt tx{\cdot}-\pt ty{\cdot}}\sigma(dy)\\
 & \leq\int\FF t{d-\left|x-y\right|}\sigma(dy)
\end{align*}
Finally, it is well-known that $\sup_{k}\FF tk=\FF t0$, so we obtain
the overall bound of $\tv{\pt tx{\cdot}-\sigma}\leq\FF t0$.

We now turn our attention to the one-dimensional case, to see that
this last bound is tight within a factor of 2. Let us assume $\Omega=\left[0,d\right]$
and $X_{0}=0$. Let 
\[
V(t,x)=\mathbb{P}(X_{t}\leq d/2|X_{0}=x)=\int\mathbb{I}_{y\leq d/2}\pt tx{dy}
\]
It is well-known that in this simple one-dimensional case, $V$ is
the unique solution to 
\begin{align*}
\frac{\partial}{\partial t}V(t,x) & =\frac{1}{2}\frac{\partial^{2}}{\partial x^{2}}V(t,x)\\
\frac{\partial}{\partial x}V(t,0)=\frac{\partial}{\partial x}V(t,d) & =0\\
V(0,x) & =\mathbb{I}_{x\leq d/2}
\end{align*}
Note that since $V(0,x)$ is discontinuous, we need to construe these
equations weakly; we refer the reader to \cite{fukushima2010dirichlet}
for a detailed account of how this can be done and why this partial
differential equation governs the behavior of $V$. The key idea is
that $V(t,\cdot)=e^{-A^{2}t}V(0,\cdot)$ where $e^{-A^{2}t}$ is the
$\mathscr{L}^{2}$ operator whose construction we outlined in Lemma
\ref{lem:(Properties-of-Reflected}. Once the problem is properly
formulated, a fourier expansion immediately yields the solution: 
\begin{align*}
V(t,x) & =\frac{1}{2}+\frac{1}{2}\sum_{n=0}^{\infty}e^{-\frac{\pi^{2}}{2d^{2}}\left(2n+1\right)^{2}t}\frac{4\left(-1\right)^{n}}{\pi\left(2n+1\right)}\cos\left(\left(2n+1\right)\times\frac{\pi x}{d}\right)\\
 & =\frac{1}{2}+\frac{1}{2}\FF{4t}{2x}
\end{align*}
In particular, 
\begin{align*}
\tv{\pt t0{\cdot}-\sigma} & =\sup_{A}\left|\mathbb{P}(X_{t}\in A|X_{0}=0)-\sigma(A)\right|\\
 & \geq\left|\mathbb{P}(X_{t}\leq d/2|X_{0}=0)-\frac{1}{2}\right|\\
 & =\frac{1}{2}\FF{4t}0
\end{align*}
\end{proof}

\section{Conclusions}

Among all convex sets with diameter $d$, this work begins to suggest
that the one-dimensional interval (e.g. $\Omega=[0,d]$) may provide
a kind of worst-case scenario for mixing rates. This is helpful because
the one-dimensional interval is easy to analyze.

Unfortunately, ``typical'' high-dimensional sets of diameter $d$
may mix much faster than our bounds would suggest. Thus, Equation
\ref{eq:matthews} from \cite{matthews1990mixing} seemed to imply
that mixing might get slower in high dimensions, the bound in our
theorem does not depend upon the dimension, but the reality may be
that mixing typically gets faster in higher dimensions. For example,
preliminary analysis suggests that $n$-dimensional Brownian motion
in the unit $n$-dimensional ball mixes quite a bit faster than one-dimensional
Brownian motion on $[0,d]$, especially as $n\rightarrow\infty$.
On the other hand, we are simply not sure about the total-variation
mixing rate for Brownian motion in a high-dimensional cube. Are there
simple ways to improve our bounds when we know more about $\Omega$?
This is a possible direction of future research.

In another direction, it should be possible to extend this basic method
of proof to accommodate a wide variety of Ito diffusions, beyond Brownian
motion. Mirror couplings are available for many such diffusions; we
refer the reader to \cite{lindvall1986coupling} and the many papers
which have cited it. For every such mirror coupling process, $\left\{ X_{t},Y_{t}\right\} _{t\geq0}$,
one can analyze the one-dimensional process $R_{t}=\left|X_{t}-Y_{t}\right|$.
By applying Ito's lemma, Dambis-Dubins-Schwarz, and taking bounds,
one can often obtain a stochastic differential inequality of the form
$dR_{t}\leq d\tilde{R}_{t}$, where $\tilde{R}_{t}$ is some semimartingale
that is better-understood. Applying stochastic differential inequality
results such as those found in \cite{ding1998new}, one can then obtain
bounds for the high-dimensional process with some simple one-dimensional
process.

For example, consider the stochastic differential equation $dX_{t}=\mu(X_{t})dt+W_{t}$
where $\mu$ is some Lipschitz vector field. Similar to the technique
we used, let $dY_{t}=\mu(Y_{t})dt-dW_{t}-2\eta_{t}\left\langle \eta_{t},dW_{t}\right\rangle $,
where $\eta$ is a normalized version of $X-Y$. As in our case, if
we define $B_{4t}=\int_{0}^{t}2\left\langle \eta_{s},dW_{s}\right\rangle $
then we can show that $B$ is a one-dimensional Brownian motion. Finally,
let $\Gamma$ denote some Lipschitz function satisfying $\Gamma(r)\geq\sup_{\left|x-y\right|=r}\left\langle x-y,\mu(x)-\mu(y)\right\rangle $,
and consider the simple one-dimensional stochastic differential equation
\[
Z_{t}=\left|x-y\right|+\int_{0}^{t}\frac{\Gamma(Z_{s})}{Z_{s}}ds+B_{4t}
\]
Using the results from \cite{ding1998new}, one can readily show that
$R_{t}\leq Z_{t}$. Thus 
\[
\inf\left\{ t:\ X_{t}=Y_{t}\right\} \leq\inf\left\{ t:\ Z_{t}=0\right\} 
\]
In short, by analyzing the simple one-dimensional diffusion $Z$,
one can estimate coupling times for extremely complex and high-dimensional
processes. These coupling times can then be used to estimate rates
of ergodicity. Of course, these kinds of estimates may be quite poor
in some cases (e.g. consider the catastrophic case that $\sup_{\left|x-y\right|=r}\left\langle x-y,\mu(x)-\mu(y)\right\rangle =\infty$).
This difficulty is related to the problem with which we began these
conclusions: in some cases it may be very difficult to get high-quality
bounds using only a one-dimensional diffusion.

We have seen that one-dimensional diffusions can be used to analyze
very high-dimensional diffusions, although the bounds may not be ideal
in certain cases. Might it be possible to improve this basic technique
to allow two or three-dimensional diffusions to give insight into
high-dimensional processes? This is an intriguing question for future
research. 

\bibliographystyle{plain}
\bibliography{refs}

\end{document}